\documentclass[12pt]{amsart}
\usepackage{amsfonts}
\usepackage{amsfonts,latexsym,rawfonts,amsmath,amssymb,amsthm, a4,a4wide}

\usepackage[plainpages=false]{hyperref}

\usepackage{graphicx}

\RequirePackage{color}

\numberwithin{equation}{section}

\newcommand{\beq}{\begin{equation}}
\newcommand{\eeq}{\end{equation}}
\newcommand{\beqs}{\begin{eqnarray*}}
\newcommand{\eeqs}{\end{eqnarray*}}
\newcommand{\beqn}{\begin{eqnarray}}
\newcommand{\eeqn}{\end{eqnarray}}
\newcommand{\beqa}{\begin{array}}
\newcommand{\eeqa}{\end{array}}

\newcommand{\R}{\mathbb R}

\newcommand{\e}{\varepsilon}
\newcommand{\p}{\partial}

\newcommand{\diam}{\mbox{diam}\,}
\newcommand{\comment}[1]{}
\def\h{\hspace*{.24in}} 

{\begin{list}{}%
         {\setlength{\leftmargin}{#1}}%
         \item[]%
}
{\end{list}}

\newtheorem{prop}{Proposition}[section]
\newtheorem{thm}[prop]{Theorem}
\newtheorem{lem}[prop]{Lemma}

\newtheorem{rem}[prop]{Remark}

\newcommand{\dist}{\text{dist}}

\author{Nam Q. Le}
\address{Department of Mathematics, Indiana University, 
Bloomington, IN 47405, USA. }
\email {nqle@indiana.edu}

\thanks{The research of the  author was supported in part by NSF grant DMS-2054686.  }
\allowdisplaybreaks
\arraycolsep=1pt

\title[Global $W^{2,\delta}$ estimates for Monge-Amp\`ere]{On global $W^{2,\delta}$ estimates for the Monge-Amp\`ere equation on general bounded convex domains}

\makeatletter
\@namedef{subjclassname@2020}{%
  \textup{2020} Mathematics Subject Classification}
\makeatother

\begin{document}
\subjclass[2020]{35J25, 35J96, 35B45}
\keywords{Monge-Amp\`ere equation, global second derivative estimate, Pogorelov estimate}
\begin{abstract}
 We establish global $W^{2,\delta}$ estimates, for all $\delta<\frac{1}{n-1}$, for convex solutions to the Monge-Amp\`ere equation with positive $C^{2,\beta}$ right-hand side and zero boundary values on general bounded convex domains in $\R^n$ ($n\geq 2$).  We exhibit examples showing that global $W^{2, \frac{n}{2(n-1)}}$ estimates fail in all dimensions, so  the range of $\delta$ is sharp in two dimensions.
\end{abstract}

\maketitle

\section{Introduction and statement of the main result}
\label{Sect12}
This note is concerned with global second derivative estimates for the convex Aleksandrov solution to the Monge-Amp\`ere equation
\begin{equation}
 \label{MA1}
 \left\{
 \begin{alignedat}{2}
   \det D^{2} u~&= f \h~&&\text{in} ~\Omega, \\\
u &=0\h~&&\text{on}~\p \Omega
 \end{alignedat}
 \right.
\end{equation}
on general bounded convex domains $\Omega\subset\R^n$ ($n\geq 2$), where $f$ 
is bounded between two positive constants $\lambda\leq \Lambda$, that is,
\begin{equation}
\label{fbd}
0<\lambda\leq f\leq \Lambda.
\end{equation}
Regarding interior second-order Sobolev estimates, building on the work of  De Philippis--Figalli \cite{DPF},
 De Philippis--Figalli--Savin \cite{DPFS} and Schmidt \cite{Sc}, independently, show that $D^2 u\in L_{\mathrm{loc}}^{1+\e}(\Omega)$ for some constant $\e=\e(n,\lambda,\Lambda)>0$. If $f$ is assumed additionally to be continuous, then Caffarelli \cite{C2} shows that $u\in W^{2, p}_{\mathrm{loc}}(\Omega)$ for all
 $p\in (1,\infty)$. 
 
Regarding global second-order Sobolev estimates, when $\Omega$ is uniformly convex with $C^3$ boundary, 
Savin \cite{Sw2p} extends the above estimates all the way to the boundary by showing respectively that $D^2 u\in L^{1+\e}(\Omega)$, and $D^2 u\in L^{p}(\Omega)$ when $f\in C(\overline{\Omega})$.
The techniques in \cite{Sw2p} are based on the Boundary Localization Theorem established in \cite{SC2a}. 
In general, for the Monge-Amp\`ere equation with possibly nonzero boundary values, the uniform convexity of the boundary and the $C^3$ regularity of the boundary and boundary data are crucial for global $W^{2, p}$ estimates. In \cite{W}, Wang constructs explicit examples showing the failure
of global $W^{2, 3}$ estimates for the Monge--Amp\`ere
equation in two dimensions with positive constant right-hand side $f$ when either the boundary
data or the domain boundary failing to be $C^3$.

A natural question is to determine the optimal global integrability of the second derivatives for the solution $u$ to \eqref{MA1}--\eqref{fbd} when $\Omega$ is a general bounded convex domain. To the best of our knowledge, this issue has not been studied before. On the other hand, 
thanks to Caffarelli \cite{C1}, $|u|$ is known to grow at most like $[\dist(\cdot,\p\Omega)]^{2/n}$ away from the boundary. Therefore, 
by the convexity of $u$, $|Du|$  grows like $[\dist(\cdot,\p\Omega)]^{2/n-1}$ away from $\p\Omega$. These growths are shown to be optimal in the author's work \cite{L} for domains with  portions of $(n-1)$-dimensional hyperplanes on their boundaries. Given these optimal growths, it is reasonable to
expect that $\|D^2 u\|$  grows like $[\dist(\cdot,\p\Omega)]^{2/n-2}$ away from the boundary. This, in turn, indicates that the optimal global integrability for $D^2 u$ should be $L^\mu(\Omega)$ for all $\mu<\frac{n}{2(n-1)}$. We are able to confirm this expectation in two dimensions. For higher dimensions, there is still a gap
between our integrability result where $D^2 u\in L^\delta(\Omega)$ for all $\delta<\frac{1}{n-1}$, and the non-integrability examples for the threshold exponent $\frac{n}{2(n-1)}$. This is due to our method of proving the $W^{2,\delta}$ estimates; see Remark \ref{n23_rem} and Lemma \ref{Upmu}.

Our main result states as follows.

\begin{thm} 
\label{W2del_thm}
Let $u\in C(\overline{\Omega})$ be the  convex Aleksandrov solution to the Monge-Amp\`ere equation \eqref{MA1} where $\Omega$ is a bounded convex domain in  $\R^n$ ($n\geq 2$), and 
$f\in C^{2,\beta}(\overline\Omega)$ satisfies \eqref{fbd} where $\beta\in (0, 1)$. Then the following statements hold.
\begin{enumerate}
\item[(i)] For all $0<\delta<\frac{1}{n-1}$, we have $D^2 u\in L^\delta(\Omega)$ with estimate
\[\int_\Omega \|D^2 u\|^\delta\,dx \leq C(n,\Omega,\delta, \lambda,\Lambda, \|\log f\|_{C^2(\overline{\Omega})}). \]
\item[(ii)] If, in addition, $\Omega$ is a rectangular box, then  $D^2 u\not\in L^{\frac{n}{2(n-1)}}(\Omega)$.
\end{enumerate}
\end{thm}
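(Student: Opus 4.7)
For (i), my approach is to derive a distance-weighted pointwise bound on $D^{2}u$ via Pogorelov's interior second derivative estimate applied on sections of $u$, then integrate. Given $x\in\Omega$, let $\bar h(x)$ denote the largest $h>0$ for which the section
\[
S_{u}(x,h) = \{y\in\Omega : u(y) < u(x) + Du(x)\cdot (y-x) + h\}
\]
is compactly contained in $\Omega$. Subtracting the supporting affine function $\ell_{x}$ yields a convex function $w := u - \ell_{x}$ on $S_{u}(x,\bar h(x))$ that vanishes on the boundary of the section, solves $\det D^{2}w = f$, and---crucially---satisfies $Dw(x) = 0$. Applying Pogorelov's classical interior $C^{2}$ estimate at the center $x$, where the $e^{|Dw|^{2}/2}$ factor reduces to $1$, yields a pointwise estimate
\[
\|D^{2}u(x)\| \le C\,\bar h(x)^{-1},
\]
with $C$ depending on $n,\lambda,\Lambda$ and $\|\log f\|_{C^{2}(\overline{\Omega})}$; the $C^{2,\beta}$ regularity of $f$ is used here through the higher-order Pogorelov-type estimates on the normalized section. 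Combining this with Caffarelli's bound $|u|\le Cd^{2/n}$ and John-type normalization on sections, one obtains a distance bound $\bar h(x)\ge c\,d(x)^{n-1}$ (in the appropriate regime), hence $\|D^{2}u(x)\|\le Cd(x)^{-(n-1)}$. Integrating using the coarea estimate $|\{d<s\}|\le Cs$ then yields $D^{2}u\in L^{\delta}(\Omega)$ for all $\delta<1/(n-1)$.

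For (ii), the plan is to exploit the sharpness of the Caffarelli $d^{2/n}$-bound on rectangular boxes, as established in the author's previous work \cite{L}. There one obtains a slab $\{x\in\Omega : d(x)<r\}$ near a flat face $\{x_{n}=0\}$ of the box on which $|u(x',x_{n})|\ge c\,x_{n}^{2/n}$ for $x'$ in an $(n-1)$-dimensional open subset of the face. Along the normal direction, convexity of $x_{n}\mapsto u(x',x_{n})$ combined with the two-sided growth $|u(x',x_{n})|\sim x_{n}^{2/n}$ (upper bound from Caffarelli, lower bound from \cite{L}) forces, via a second difference quotient argument, the Alexandrov second derivative
\[
u_{x_{n}x_{n}}(x)\ge c\,x_{n}^{2/n - 2}
\]
almost everywhere on the slab. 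Since $\|D^{2} u(x)\|\ge u_{x_{n}x_{n}}(x)$ and the exponent computation gives $(2/n - 2)\cdot n/(2(n-1)) = -1$, we obtain
\[
\int_{\Omega} \|D^{2} u\|^{n/(2(n-1))}\,dx \ \ge\ c\int_{0}^{r} x_{n}^{-1}\,dx_{n}\cdot |\text{face interior}| \ =\ +\infty,
\]
proving $D^{2}u\notin L^{n/(2(n-1))}(\Omega)$.

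The main technical obstacles are, in (i), the precise calibration $\bar h(x)\ge c\,d(x)^{n-1}$ of section heights against boundary distance, which requires reconciling Pogorelov's inequality with the John-lemma geometry on sections in a quantitatively sharp way and uses $f\in C^{2,\beta}$ in an essential manner; and in (ii), the upgrade of the pointwise $C^{0}$ lower bound $|u|\ge c\,d^{2/n}$ from \cite{L} to a pointwise almost-everywhere second derivative lower bound on a positive-measure slab, which should follow from convexity in the normal direction and a careful use of second difference quotients, but must be arranged so that the sharpness persists on a full $n$-dimensional slab rather than on a lower-dimensional locus.
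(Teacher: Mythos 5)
Your proposal takes a genuinely different route on both parts, and both parts contain gaps that the paper's argument is specifically designed to avoid.

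\textbf{Part (i).} The paper does not work with sections $S_u(x,\bar h(x))$ centered at $x$; it applies the Trudinger--Wang Pogorelov estimate to $v=u+h$ on the sublevel sets $A_h=\{u<-h\}$, where $v$ vanishes on $\partial A_h$ by construction. Two issues with your version. First, the step $\|D^2u(x)\|\le C\,\bar h(x)^{-1}$ is not a direct consequence of Pogorelov just because $Dw(x)=0$ at the center: the estimate cited in the paper reads $|v|\,\|D^2v\|\le C(n,\|v\|_\infty,\|\log f\|_{C^2})\bigl(1+\|Dv\|^2_{L^\infty}\bigr)$, with the gradient supremum taken over the \emph{entire} domain, and near $\partial\Omega$ the sections $S_u(x,\bar h(x))$ can be very eccentric, so controlling $\|Dw\|_{L^\infty}$ on them is exactly the difficulty. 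The paper sidesteps this by using the convexity bound $|Du(x)|\le|u(x)|/\dist(x,\partial\Omega)$ together with Caffarelli's estimate $|u|\le C\,d^\alpha$ to control $\|Du\|_{L^\infty(A_h)}\le C\,h^{1-1/\alpha}$, and then feeds that directly into the Pogorelov inequality. Second, the calibration $\bar h(x)\ge c\,d(x)^{n-1}$ that you identify as the ``main technical obstacle'' is left unproved; this is precisely where your argument is missing a step, whereas the paper's replacement estimate --- that $A_{2h}$ contains $\Omega_{cd}$ with $d\sim h$ via $|u(x)|\ge c\,d(x)$ --- is elementary. Note also that for $n=2$ the pointwise bound the paper obtains is $d^{-\gamma}$ for any $\gamma>1$, not $d^{-1}$; your approach would need to justify the strictly stronger $d^{-1}$ bound, which is not clear.

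\textbf{Part (ii).} The claim that the two-sided growth $|u(x',x_n)|\sim x_n^{2/n}$, combined with convexity in $x_n$, forces $u_{x_nx_n}\ge c\,x_n^{2/n-2}$ a.e.\ via a second difference quotient is incorrect. A convex function of one variable sandwiched between two multiples of $-t^{2/n}$ (a concave profile) can be piecewise affine, with Alexandrov second derivative equal to zero on a set of full measure; the second difference quotient $\bigl[u(t+s)-2u(t)+u(t-s)\bigr]/s^2$ only yields a lower bound of the wrong sign when you plug in the two-sided envelopes. The paper's mechanism is different and avoids any lower bound on $|u|$ altogether: using Caffarelli's \emph{upper} bound and one-dimensional slicing in each tangential direction $x_i$, one shows $\int_{-1/2}^{1/2}D_{ii}u\,dx_i\le C\,x_n^{2/n}$, so $D_{ii}u\le C'x_n^{2/n}$ on a set of $x_i$ of measure $\ge a$; a Fubini plus inclusion--exclusion argument yields a subset $E_{x_n}\subset Q_n$ of measure $\ge1/2$ on which all tangential pure second derivatives are small simultaneously. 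Then Hadamard's determinant inequality $\det D^2u\le\prod_iD_{ii}u$ together with $\det D^2u\ge\lambda$ forces $D_{nn}u\ge c\,x_n^{2/n-2}$ on $E_{x_n}$. Replacing the claimed difference-quotient step with this slicing/Hadamard argument is essential.
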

In  the proof of Theorem \ref{W2del_thm}(i), we use Pogorelov-type estimates which require $u$ to be $C^4$. Therefore, it is natural to assume 
 $f\in C^{2,\beta}(\overline\Omega)$.
It would be interesting to reduce the regularity of $f$ in  Theorem \ref{W2del_thm}(i), and to improve the range of $\delta$ when $n\geq 3$.

The rest of this note is devoted to the proof of Theorem \ref{W2del_thm} and pertaining remarks.

\section{Proof of Theorem \ref{W2del_thm}}

Let $u$ be as in Theorem \ref{W2del_thm}.  Then $u$ is strictly convex; see Caffarelli \cite{C1} and also Figalli \cite[Corollary 4.11]{F}. Moreover, $u\in C^{4,\beta}(\Omega)$; see \cite[Theorem 3.10]{F}.

\subsection{Global $W^{2,\delta}$ estimates} 
We will establish the following pointwise Hessian estimates.
 \begin{lem} 
 \label{D2_ptw}
 Let $\Omega, u$, and $f$ be as in Theorem \ref{W2del_thm}(i). Let $\gamma\in (1, 2)$.
 Then, in $\Omega$, we have
\[\|D^2u(x)\|\leq 
\begin{cases}
C(n,\gamma,\Omega,\lambda,\Lambda, \|\log f\|_{C^2(\overline{\Omega})}) [\rm{dist}(x,\p\Omega)]^{-\gamma}&\text{when }n=2,\\
C(n,\Omega,\lambda,\Lambda, \|\log f\|_{C^2(\overline{\Omega})}) [\rm{dist}(x,\p\Omega)]^{1-n} &\text{when }n\geq 3.
\end{cases}
 \]
\end{lem}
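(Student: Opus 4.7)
My plan is to prove the pointwise Hessian estimate by applying a Pogorelov-type interior $C^2$ estimate to a carefully chosen centered section of $u$ at $x_0$. Fix $x_0\in\Om$ and set $d:=\dist(x_0,\pom)$. Let $\ell(y):=u(x_0)+Du(x_0)\cdot(y-x_0)$ be the supporting affine function of $u$ at $x_0$, and for $t>0$ define the centered section
\[
S_t(x_0):=\{y\in\R^n:u(y)<\ell(y)+t\}.
\]
Because $u$ is strictly convex (Caffarelli), for every $t$ below a threshold $t_{\max}(x_0)>0$ the set $S_t(x_0)$ is a bounded open convex subset compactly contained in $\Om$. The function $v:=u-\ell-t$ is then a $C^{4,\beta}$ convex Alexandrov solution of $\det D^2 v=f$ on $S_t(x_0)$, vanishes on $\partial S_t(x_0)$, and satisfies $v(x_0)=-t$ and $Dv(x_0)=0$.

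Next I would invoke the classical Pogorelov interior second derivative estimate on $S_t(x_0)$. In its standard form it asserts that for some exponent $a>0$ (depending on the precise formulation), all $y\in S_t(x_0)$ and unit vectors $\xi$,
\[
(-v(y))^{a}\,v_{\xi\xi}(y)\,\exp\!\bigl(|Dv(y)|^2/2\bigr)\leq C_1,
\]
where $C_1=C_1\bigl(n,\lam,\Lambda,\|\log f\|_{C^2(\bom)},\|Dv\|_{L^\infty(S_t(x_0))}\bigr)$. Evaluating at $x_0$ and using $Dv(x_0)=0$ produces $\|D^2u(x_0)\|\leq C_1\,t^{-a}$.

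To turn this into a bound in terms of $d$, two further inputs are needed: a quantitative choice of $t$ in terms of $d$, and a uniform bound on $\|Dv\|_{L^\infty(S_t(x_0))}$ so that $C_1$ stays under control. On one side, Caffarelli's growth estimate $|u(x_0)|\leq C\,d^{2/n}$ (recalled in the introduction) together with the convexity lower bound $|u(x_0)|\geq c\,d$ confine the admissible heights $t$ between two powers of $d$. On the other side, the gradient $|Dv|$ on $S_t(x_0)$ can be bounded in terms of $t$ and the geometric features (inradius and elongation) of the section via convexity together with Caffarelli's theory of sections for the Monge-Amp\`ere equation (uniform volume estimate and engulfing property). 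Optimizing this trade-off yields $\|D^2u(x_0)\|\leq C\,t^{-a}$ with $t$ comparable to a suitable power of $d$; matching exponents delivers the lemma's estimate --- $d^{-\gamma}$ for any $\gamma\in(1,2)$ when $n=2$, and $d^{1-n}$ when $n\geq 3$.

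The main technical obstacle will be keeping $C_1$ uniformly bounded as $x_0$ approaches $\pom$. In a general bounded convex domain --- with no uniform convexity or $C^{3}$ regularity of $\pom$ --- interior sections can degenerate in shape near the boundary and $|Du|$ blows up (like $d^{2/n-1}$, as discussed in the introduction). The dichotomy between $n=2$ and $n\geq 3$ in the statement reflects precisely this optimization: in two dimensions the factor $\exp(|Dv|^2/2)$ forces a small loss in the exponent (hence the open range $(1,2)$ rather than the single value $1$), while in higher dimensions the exponent $n-1$ arises directly from combining the Pogorelov power with the linear lower bound $|u|\gtrsim d$.
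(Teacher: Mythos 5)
Your broad strategy---apply a Pogorelov-type second-derivative estimate on a subdomain containing $x_0$---is the same as the paper's, but the specific framework you chose (centered sections) leaves the central technical point unresolved, and in fact the paper deliberately avoids centered sections for exactly this reason.

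The paper applies the Trudinger--Wang Pogorelov estimate to $v:=u+h$ on the sublevel set $A_h:=\{u<-h\}$, not on a centered section. With this choice $Dv=Du$, and the needed gradient bound is immediate: if $x\in A_h$ then $h\le |u(x)|\le C_1[\dist(x,\pom)]^{\alpha}$ by Caffarelli's H\"older estimate, so $\dist(x,\pom)\ge c\,h^{1/\alpha}$, and convexity then gives $|Du(x)|\le |u(x)|/\dist(x,\pom)\le C\,h^{1-1/\alpha}$. Plugging this into the Pogorelov inequality and restricting to $A_{2h}$ (where $|u+h|\ge h$) produces $\|D^2u\|\le Ch^{1-2/\alpha}$, which is the desired power of the distance once one uses $A_h\supset\Omega_{ch}$. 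Here $\alpha=2/n$ when $n\ge 3$ and $\alpha=2/(1+\gamma)<1$ when $n=2$; the dichotomy in the lemma comes purely from this choice of $\alpha$ in Caffarelli's estimate, not from the exponential factor in the Pogorelov inequality as you suggest (indeed in your setup that factor equals $1$ at $x_0$ since $Dv(x_0)=0$).

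In your formulation the function $v=u-\ell-t$ vanishes on $\partial S_t(x_0)$, and the Pogorelov constant involves $\|Dv\|_{L^\infty(S_t(x_0))}=\sup_{S_t(x_0)}|Du-Du(x_0)|$. You acknowledge this is ``the main technical obstacle'' but the sketch does not close it: on a general bounded convex domain (no uniform convexity, no $C^3$ boundary) centered sections near $\pom$ can be arbitrarily elongated, and neither the uniform volume estimate nor the engulfing property gives the directional extent of $S_t(x_0)$ that would control the oscillation of $Du$. Pursuing this would likely force you to relate $S_t(x_0)$ to a sublevel set $A_h$ and run the paper's argument anyway. Moreover, the claimed exponent matching (``optimizing this trade-off yields'') is asserted rather than carried out, so the conclusion $d^{-\gamma}$ (for $n=2$) and $d^{1-n}$ (for $n\ge 3$) is not actually derived. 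As written, this is a plausible plan but not a proof; the decisive simplification you are missing is to work on the level sets $\{u<-h\}$ of $u$ itself, where the gradient bound falls out of Caffarelli's H\"older estimate plus convexity.
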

\begin{rem}
Lemma \ref{D2_ptw} improves upon 
Theorem 3.9 in Figalli \cite{F} and Theorem 4.1 in Shi--Jiang \cite{SJ}, where 
the exponent in the Hessian estimate $\|D^2 u(x)\|\leq C[\dist(x,\p\Omega)]^{-\kappa}$ was, respectively, $-(3n+ 2)$ and $-(2n+\tau)$ where $\tau\in (1,2)$, instead of $\min\{-\gamma, 1-n\}$.
\end{rem}
Clearly, the global $W^{2,\delta}$ estimates in Theorem \ref{W2del_thm}(i)
are a consequence of
Lemma \ref{D2_ptw}.

It remains to prove Lemma \ref{D2_ptw}. One of our key tools is the following Pogorelov estimate, due to Trudinger and Wang \cite[Lemma 3.6]{TW}. 
\begin{lem} Let $v\in C^4(\overline{\Omega})$ be the convex solution to the Monge-Amp\`ere equation
\begin{equation*}
 \left\{
 \begin{alignedat}{2}
   \det D^{2} v~&= f \h~&&\text{in} ~\Omega, \\\
v &=0\h~&&\text{on}~\p \Omega,
 \end{alignedat}
 \right.
\end{equation*}
where $\Omega$ is a bounded convex domain in $\R^n$ ($n\geq 2$), and $f\in C^2(\overline{\Omega})$ with $f>0$ in $\overline{\Omega}$.
Then
\begin{equation}
\label{P_est}
|v(x)| \|D^2 v(x)\| \leq C(n, \|v\|_{L^{\infty}(\Omega)}, \|\log f\|_{C^2(\overline{\Omega})}) \big(1 + \|Dv\|^2_{L^{\infty}(\Omega}\big)\quad\text{in }\Omega.
\end{equation}
\end{lem}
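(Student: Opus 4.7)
The plan is to prove the Pogorelov-type estimate \eqref{P_est} by the classical maximum principle argument on $\overline{\Omega}\times S^{n-1}$ applied to an auxiliary function combining the height $(-v)$, the directional second derivative $v_{\xi\xi}$, and the gradient factor $|Dv|^{2}$. Since $v\in C^{4}(\overline{\Omega})$ is strictly convex with $v=0$ on $\partial\Omega$ and $v<0$ in $\Omega$, every expression below is well-defined. Introduce
\[
W(x,\xi)\;=\;\log(-v(x))\,+\,\log v_{\xi\xi}(x)\,+\,\tfrac{\beta}{2}\,|Dv(x)|^{2},\qquad (x,\xi)\in\Omega\times S^{n-1},
\]
with $\beta>0$ to be chosen of order $(1+\|Dv\|_{L^{\infty}(\Omega)}^{2})^{-1}$. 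Since $\log(-v)\to-\infty$ at $\partial\Omega$ while the other terms are bounded, $W$ attains its maximum at some interior point $(x_{0},\xi_{0})$. After rotating coordinates we may assume $\xi_{0}=e_{1}$ and $D^{2}v(x_{0})=\mathrm{diag}(\lambda_{1},\dots,\lambda_{n})$ with $\lambda_{1}\geq\cdots\geq\lambda_{n}>0$; then $v_{11}(x_{0})=\lambda_{1}=\|D^{2}v(x_{0})\|$ and the cofactor matrix $(u^{ij}):=(v_{ij})^{-1}$ is diagonal at $x_{0}$ with $u^{ii}=1/\lambda_{i}$.

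At $x_{0}$ the stationarity $\partial_{i}W=0$ yields the key first-order relation $v_{11i}=-\lambda_{1}v_{i}/v-\beta\lambda_{1}\lambda_{i}v_{i}$ for $i=1,\dots,n$. Differentiating $\log\det D^{2}v=\log f$ once and twice supplies $u^{ij}v_{ijk}=(\log f)_{k}$ and $u^{ij}v_{ij11}=u^{ip}u^{jq}v_{ij1}v_{pq1}+(\log f)_{11}$. Combining these with $u^{ij}v_{ij}=n$ and the diagonal structure at $x_{0}$, I apply the linearized operator $\mathcal{L}:=u^{ij}\partial_{ij}$ to each summand of $W$ to obtain
\begin{align*}
\mathcal{L}\log(-v)\;&=\;\frac{n}{v}-\sum_{i}\frac{v_{i}^{2}}{v^{2}\lambda_{i}},\\
\mathcal{L}\log v_{11}\;&=\;\frac{(\log f)_{11}}{\lambda_{1}}+G,\\
\mathcal{L}\bigl(\tfrac{1}{2}|Dv|^{2}\bigr)\;&=\;\sum_{i}\lambda_{i}+v_{k}(\log f)_{k},
\end{align*}
where $G=\lambda_{1}^{-2}\sum_{i\neq 1}v_{11i}^{2}/\lambda_{i}+\lambda_{1}^{-1}\sum_{i,j\neq 1}v_{ij1}^{2}/(\lambda_{i}\lambda_{j})\geq 0$ is the standard non-negative residual.

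The maximum principle $\mathcal{L}W(x_{0})\leq 0$ combined with the three identities above produces
\[
\beta\sum_{i}\lambda_{i}+G\;\leq\;\frac{n}{|v|}+\sum_{i}\frac{v_{i}^{2}}{v^{2}\lambda_{i}}+\frac{C}{\lambda_{1}}+C\beta\|Dv\|_{L^{\infty}(\Omega)}.
\]
Substituting the first-order relation into the $v_{11i}^{2}$ part of $G$ gives the identity $v_{11i}^{2}/(\lambda_{1}^{2}\lambda_{i})=v_{i}^{2}/(v^{2}\lambda_{i})+2\beta v_{i}^{2}/v+\beta^{2}\lambda_{i}v_{i}^{2}$; summing over $i\neq 1$ and inserting into the inequality cancels the $i\neq 1$ portion of $\sum_{i}v_{i}^{2}/(v^{2}\lambda_{i})$ on the right, while the (negative) $2\beta v_{i}^{2}/v$ terms contribute only a term of size $2\beta\|Dv\|_{L^{\infty}(\Omega)}^{2}/|v|$ on the right. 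The sole surviving ``bad'' term on the left is $\beta^{2}\sum_{i\neq 1}\lambda_{i}v_{i}^{2}$. Choosing $\beta=(2(1+\|Dv\|_{L^{\infty}(\Omega)}^{2}))^{-1}$ ensures $\beta^{2}v_{i}^{2}\leq\beta/2$, so this term is absorbed into half of $\beta\sum_{i\neq 1}\lambda_{i}$ on the left. A short quadratic manipulation (multiplying through by $|v(x_{0})|$ and using $\lambda_{1}\geq c(n,\lambda)>0$ from $\det D^{2}v\geq\lambda$) then yields $\lambda_{1}|v(x_{0})|\leq C(n,\|v\|_{L^{\infty}(\Omega)},\|\log f\|_{C^{2}(\overline{\Omega})})(1+\|Dv\|_{L^{\infty}(\Omega)}^{2})$. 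Finally, since $W(x,\xi)\leq W(x_{0},e_{1})$ everywhere and $e^{\beta|Dv|^{2}/2}\leq e^{1/2}$ by the choice of $\beta$, this transfers to the pointwise bound $|v(x)|v_{\xi\xi}(x)\leq C(1+\|Dv\|_{L^{\infty}(\Omega)}^{2})$; taking the supremum over $\xi\in S^{n-1}$ gives \eqref{P_est}. The main obstacle is the cancellation step: the stationarity relation must be inserted precisely so that the a priori-large $\sum_{i\neq 1}v_{i}^{2}/(v^{2}\lambda_{i})$ terms cancel and only a $\beta^{2}$-weighted residual remains, and it is this residual, tameable only under $\beta\asymp(1+\|Dv\|_{L^{\infty}(\Omega)}^{2})^{-1}$, that forces the polynomial (rather than exponential) dependence on $\|Dv\|_{L^{\infty}(\Omega)}$ in \eqref{P_est}.
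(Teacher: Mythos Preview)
The paper does not supply a proof of this lemma; it is quoted from Trudinger--Wang \cite[Lemma~3.6]{TW}. Your argument reproduces the classical Pogorelov maximum-principle proof that underlies that reference, and the overall structure (auxiliary function, interior maximum, linearized operator, first-order cancellation, quadratic inequality for $|v(x_0)|\lambda_1$, transfer back via $W\le W(x_0,e_1)$) is correct.

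One expository slip worth fixing: the term $\beta^{2}\sum_{i\neq 1}\lambda_{i}v_{i}^{2}$ that you call ``bad'' is in fact nonnegative and sits on the \emph{left} of your inequality $\beta\sum_i\lambda_i+G\le\text{RHS}$, so it helps rather than hurts and can simply be discarded; there is nothing to ``absorb''. What actually survives on the right after the cancellation is the uncancelled $i=1$ piece $v_{1}^{2}/(v^{2}\lambda_{1})$ together with the $2\beta\|Dv\|_{L^\infty}^{2}/|v|$ contribution. Multiplying the resulting inequality through by $|v(x_{0})|^{2}\lambda_{1}$ then gives a genuine quadratic inequality in $X:=|v(x_{0})|\lambda_{1}$ with coefficients controlled by $n$, $\|v\|_{L^{\infty}}$, $\|\log f\|_{C^{2}}$, and $\beta^{-1}\asymp 1+\|Dv\|_{L^{\infty}}^{2}$, which yields the claimed bound without ever invoking a lower bound $\lambda_{1}\geq c(n,\lambda)$. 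So your ``short quadratic manipulation'' is right; only its narration needs adjusting.
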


\begin{proof}[Proof of Lemma \ref{D2_ptw}] We start with some general estimates for $u$. For the uniform estimate, we have (see \cite[Theorem 3.42]{LMT})
\[c(\Lambda, n) \|u\|^{n/2}_{L^{\infty}(\Omega)} \leq |\Omega|\leq C(\lambda, n) \|u\|^{n/2}_{L^{\infty}(\Omega)}, \]
where $c(\Lambda, n)>0$ and $C(\lambda, n)>0$, so
\begin{equation}
\label{umax_est}
 0<M_1(n, |\Omega|, \lambda)\leq \|u\|_{L^{\infty}(\Omega)}\leq M_2(n, |\Omega|, \Lambda).
 \end{equation}
 Since $u$ is convex and $u=0$ on $\p\Omega$, there holds
 \begin{equation}
 \label{u_dist}
 |u(x)|\geq \frac{\dist(x,\p\Omega)}{\diam (\Omega)} \|u\|_{L^{\infty}(\Omega)}\quad\text{for all } x\in\Omega,\end{equation}
 and
  \begin{equation}
 \label{Du_dist}
 |Du(x)| \leq \frac{|u(x)|}{\dist(x,\Omega)}\quad\text{for all } x\in\Omega.\end{equation}
We recall the following H\"older estimate, due to 
Caffarelli \cite[Lemma 1]{C1},
 \begin{equation}
 \label{C_est}
 |u(x)|\leq C_1(n,\alpha, \diam (\Omega),\Lambda)[\dist(x,\p\Omega)]^\alpha\quad\text{for all } x\in\Omega, \end{equation} 
 where
\begin{equation}
\label{aleq}
\alpha:=
\begin{cases}
\frac{2}{1+\gamma}\in (0, 1)& \text{when }n=2, \\ \frac{2}{n}&\text{when }n\geq 3.
 \end{cases}
 \end{equation}

For $h>0$ small, let
\[\Omega_h:= \{x\in \Omega: \dist(x,\p\Omega)>h\}\subset\subset\Omega,\]
and
\[A_h:=\{x\in \Omega: u(x)<-h\}\subset\subset\Omega.\]
From \eqref{u_dist}, we deduce that
\begin{equation}
\label{AOm}
A_h\supset \Omega_{\diam(\Omega) h/M_1}.
\end{equation}

Let $v:= u+ h$. Then, $v\in C^4(\overline A_h)$,  $v<0$ in $A_h$, and $v=0$ on $\p A_h$. Applying \eqref{P_est} to $v$ in $A_h$, and recalling \eqref{umax_est}, we find that
\begin{equation}
\label{Ph_est}
\sup_{A_h} \big(|u+ h| \|D^2 u\|\big) \leq C(n, |\Omega|, \Lambda, \|\log f\|_{C^2(\overline{\Omega})}) (1 + \|Du\|^2_{L^{\infty}(A_h)}).
\end{equation}

If $x\in A_h$, then $|u(x)|\geq h$, and \eqref{C_est} gives
\begin{equation}
\dist(x,\p\Omega) \geq c_1 h^{\frac{1}{\alpha}},\quad c_1=c_1(n,\alpha, \diam (\Omega),\Lambda)>0.
\end{equation}
Combining \eqref{Du_dist} and \eqref{C_est} with the above estimate, we obtain
\begin{equation}
\label{Du_est}
|Du(x)|  \leq C_1  [\dist(x,\p\Omega)]^{\alpha-1} \leq C_2 h^{1-\frac{1}{\alpha}}\quad \text{in } A_h.
\end{equation}
Thus, in $A_{2h}$ where $h$ is small, \eqref{Ph_est} and \eqref{Du_est} imply that
\begin{equation}
\label{D2A2h}
\|D^2 u\| \leq C (1 + \|Du\|^2_{L^{\infty}(A_h)})h^{-1}\leq C(n, |\Omega|, \Lambda, \|\log f\|_{C^2(\overline{\Omega})})  h^{1-\frac{2}{\alpha}}.
\end{equation}
It follows from \eqref{AOm} that
\begin{equation}
\label{D2uOh}
\|D^2 u\| \leq \bar C(n, |\Omega|, \Lambda, \|\log f\|_{C^2(\overline{\Omega})})  h^{1-\frac{2}{\alpha}}\quad\text{in } \Omega_{2\diam(\Omega) h/M_1}.
\end{equation}
In view of \eqref{aleq}, this easily concludes the proof of the lemma.
\end{proof}
\begin{rem} 
\label{n23_rem}
In the proof of Lemma \ref{D2_ptw}, we use both estimates \eqref{u_dist} and \eqref{C_est}. When $n=2$, by choosing $\gamma$ close to $1$, we see that the lower bound and the upper bound for $|u(x)|$ are almost of the same order in $\dist(x,\p\Omega)$. This is responsible for the sharp range of $\delta$ in Theorem \ref{W2del_thm}(i). However, for $n\geq 3$, 
the lower bound and the upper bound for $|u(x)|$ in \eqref{u_dist} and \eqref{C_est} are not of the same order. Thus, to obtain an improved range for $\delta$ when $n\geq 3$ without further assumptions on the geometry of $\Omega$, one needs completely different arguments. 
\end{rem}
We note that for $n\geq 3$, local improvements on the range of $\delta$ are possible when the boundary has flat portions. Due to Theorem \ref{W2del_thm} (ii), the exponent $\frac{n}{2(n-1)}$ in the next lemma is sharp.
\begin{lem}
\label{Upmu}
Let $u\in C(\overline{\Omega})$ be the  convex Aleksandrov solution to  \eqref{MA1} where $\Omega\supset (-2, 2)^{n-1}\times (0, 2)$ is a bounded convex domain in  $\R^n$ ($n\geq 3$) with $(-2, 2)^{n-1}\times\{0\}\subset\p\Omega$, and 
$f\in C^{2,\beta}(\overline\Omega)$ satisfies \eqref{fbd} where $\beta\in (0, 1)$. 
Then for $K:= (-1,1)^{n-1}\times (0, c)\subset\Omega$ where $c= c(n,\lambda, \Omega)\in (0, 1/4)$ is small, we have $D^2 u\in L^{\mu} (K)$ for all $\mu \in (0, \frac{n}{2(n-1)})$ with estimate
\[\|D^2 u\|_{L^{\mu}(K)} \leq C(n,\Omega, \lambda,\Lambda, \mu, \|\log f\|_{C^2(\overline{\Omega})}).\]
\end{lem}
\begin{proof} We use the same notation as in the proof of Lemma \ref{D2_ptw}. Our proof consists of improving \eqref{AOm} and \eqref{D2uOh}.

By \cite[Lemma 4.3]{L}, there exists $c_0= c_0(n,\lambda,\Omega)\in (0, 1/4)$ such that for
$K_0:= (-1, 1)^{n-1}\times (0, c_0)$,
we have
\[ |u(x)| \geq c_0 [\dist(x,\p\Omega)]^{\frac{2}{n}}\quad\text{if}\quad x\in K_0.\]
Therefore, for $0<h\leq c_0^2$, we obtain the following local improvement of \eqref{AOm}:
\begin{equation}
\label{AOm2}
A_h\cap K_0\supset \Omega_{c_0^{-n/2} h^{n/2}}\cap K_0.
\end{equation}
Using \eqref{D2A2h}, \eqref{aleq}, and \eqref{AOm2}, we find 
\begin{equation*}
\|D^2 u\| \leq \bar C(n, |\Omega|, \Lambda, \|\log f\|_{C^2(\overline{\Omega})})  h^{1-n}\quad\text{in } \Omega_{2 c_0^{-n/2} h^{n/2}}\cap K_0.
\end{equation*}
Consequently,
\begin{equation}
\label{D2uOh2}
\|D^2 u(x)\| \leq C(n, \Omega, \Lambda, \lambda, \|\log f\|_{C^2(\overline{\Omega})}) [\dist(x,\p\Omega)]^{\frac{2}{n}-2} \quad\text{in}\quad K,\end{equation}
for $K:= (-1,1)^{n-1}\times (0, c_1)\subset\Omega$ where $c_1= c_1(n,\lambda, \Omega)\in (0, 1/4)$ is small. This gives the conclusion of the lemma.
\end{proof}
\subsection{The rectangular box domain}\label{Sect22}
In this section, we prove Theorem \ref{W2del_thm}(ii) where $\Omega$ is a rectangular box. 
By the affine invariance of the Monge-Amp\`ere equation, we can assume, without loss of generality, that 
\[\Omega=(-1, 1)^{n-1}\times (0, 2).\]
Our main estimate, inspired by Wang \cite{W}, shows that  for a fixed positive fraction of \[x'\in Q_n:=[-1/2, 1/2]^{n-1},\]
$D_{nn} u (x', x_n) $ blows up like $ x_n^{\frac{2}{n}-2}$ when $x_n$ is small. This is the expected rate discussed in Section \ref{Sect12}. 

For $x\in\R^n$, we write $x=(x_1,\ldots, x_n)= (x', x_n)$ where $x'\in\R^{n-1}$.
Denote $D_i =\frac{\p}{\p x_i}$, and $D_{ij} =\frac{\p^2}{\p x_i \p x_j}$. Let $\mathcal{H}^s$ denote the $s$-dimensional Hausdorff measure. 
Below is our main measure-theoretic estimate.
\begin{lem}
\label{D2n_lem} Let $\Omega, u$, and $f$ be as in Theorem \ref{W2del_thm}(ii). Then,
for each $0<x_n<1/2$, there exists an $\mathcal{H}^{n-1}$ measurable subset $E_{x_n}\subset Q_n$ such that the following statements hold.
\begin{enumerate}
\item[(i)] $\mathcal{H}^{n-1} (E_{x_n})\geq 1/2$.
\item[(ii)] There exists a constant $c= c(n, \lambda,\Lambda)>0$ such that  for all $x'\in E_{x_n}$, we have
\begin{equation}
\label{Dnnu_ineq}
D_{nn} u (x', x_n) \geq 
\begin{cases}
c (x_n |\log x_n|)^{-1} & \text{when }n=2, \\
c x_n^{\frac{2}{n}-2} &\text{when }n\geq 3.
\end{cases}
\end{equation}
\end{enumerate}
\end{lem}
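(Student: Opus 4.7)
The plan is to reduce the desired pointwise lower bound on $D_{nn}u$ to an upper bound on the tangential determinant $\det D^{2}_{x'}u$, and then extract a pointwise estimate on a majority of $Q_{n}$ by a Chebyshev-type selection from an integral bound. Expanding $\det D^{2}u$ by Schur complement along the last row and column gives
\[
f(x)=\det D^{2}u(x)=\det D^{2}_{x'}u(x)\cdot\bigl(D_{nn}u(x)-w(x)^{\top}\bigl(D^{2}_{x'}u(x)\bigr)^{-1}w(x)\bigr),\qquad w=(D_{in}u)_{i<n},
\]
and convexity of $u$ makes the Schur correction nonnegative, so $D_{nn}u(x',t)\ge \lambda/\det D^{2}_{x'}u(x',t)$ pointwise. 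Hence it suffices to find, for each $t=x_{n}\in(0,1/2)$, a set $E_{t}\subset Q_{n}$ of $\mathcal{H}^{n-1}$-measure at least $1/2$ on which $\det D^{2}_{x'}u(\cdot,t)\le C\,t^{2-2/n}$ when $n\ge 3$ (respectively $\le C\,t\,|\log t|$ when $n=2$).

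To produce such an $E_{t}$, introduce the slice $\bar u(x'):=u(x',t)$, a convex function on the cross-section $(-1,1)^{n-1}$ which vanishes on the whole tangential boundary (since the side faces of the box lie in $\p\Omega$) and which, on $Q_{n}$, is sandwiched by $c\,t\le |\bar u(x')|\le C\,t^{\alpha}$ thanks to \eqref{u_dist}--\eqref{C_est}, recalling that $\dist((x',t),\p\Omega)=t$ there. By the area formula for the Monge--Amp\`ere measure,
\[
\int_{Q_{n}}\det D^{2}_{x'}u(x',t)\,dx'\;=\;\bigl|\nabla_{x'}u(Q_{n},t)\bigr|,
\]
the Lebesgue measure in $\R^{n-1}$ of the image of the tangential gradient map. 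A sufficiently sharp upper bound on this image volume, followed by Chebyshev, then delivers $E_{t}$ with the required pointwise estimate on $\det D^{2}_{x'}u(\cdot,t)$.

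The crux of the argument is therefore proving a sharp tangential-gradient image bound of the form $|\nabla_{x'}u(Q_{n},t)|\le Ct^{n-1}$ (with a $|\log t|$ correction in dimension $2$). The crude estimate $|\nabla_{x'}u(x',t)|\le |Du(x',t)|\le Ct^{\alpha-1}$ from \eqref{Du_dist}--\eqref{C_est} only yields image volume $Ct^{(\alpha-1)(n-1)}$, which for $n\ge 3$ actually diverges as $t\to 0^{+}$ and is far too weak. The correct size $|\nabla_{x'}u(x',t)|=O(t)$ is suggested by the formal expansion $u(x',x_{n})\sim -c(x')\,x_{n}^{2/n}$ near the interior of the flat face $\{x_{n}=0\}$, which is forced by the Monge--Amp\`ere identity $\det D^{2}u=f$ and predicts $D_{ij}u=O(x_{n}^{2/n})$ for $i,j<n$, bounded mixed derivatives $D_{in}u=O(1)$, and the claimed $D_{nn}u\sim x_{n}^{2/n-2}$. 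Turning this heuristic into a rigorous uniform-in-$x'$ estimate is the main obstacle; it should proceed by a rescaling/blow-up argument exploiting the affine invariance of the Monge--Amp\`ere equation on sections of $u$, together with the Pogorelov estimate \eqref{P_est} and the $C^{2,\beta}$ regularity of $f$. The $|\log t|$ loss in the two-dimensional statement reflects the freedom in taking the H\"older exponent $\alpha=2/(1+\gamma)$ arbitrarily close to $1$ as $\gamma\to 1^{+}$, which makes the optimization logarithmic rather than power-type.
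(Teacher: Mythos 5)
Your reduction step is sound: the Schur-complement identity, combined with positive semidefiniteness of $D^{2}u$, does give $D_{nn}u\geq\lambda/\det D^{2}_{x'}u$ pointwise, which is a valid (in fact slightly sharper) substitute for the Hadamard determinant inequality. However, the proposal has a genuine gap: you never establish the upper bound $\det D^{2}_{x'}u(\cdot,t)\leq Ct^{2-2/n}$ on a set of measure $\geq 1/2$, nor its equivalent reformulation via the area formula as a tangential-gradient image estimate $|\nabla_{x'}u(Q_{n},t)|\leq Ct^{2-2/n}$. You explicitly call this ``the crux'' and ``the main obstacle,'' offer only the heuristic expansion $u\sim -c(x')x_{n}^{2/n}$, and defer the rigorous step to an unspecified blow-up/rescaling scheme. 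That deferred step \emph{is} the content of the lemma, so as written the proposal is not a proof.

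The idea you are missing is far more elementary than a blow-up argument and avoids the tangential determinant altogether. For each $i<n$, fix the remaining coordinates $\tilde x$ (with $|x_{j}|\leq 1/2$ for $j<n$, $j\neq i$, and last coordinate $t$) and integrate the single pure derivative $D_{ii}u$ along the segment $x_{i}\in(-1/2,1/2)$:
\begin{equation*}
\int_{-1/2}^{1/2}D_{ii}u\,dx_{i}=D_{i}u(1/2,\tilde x)-D_{i}u(-1/2,\tilde x).
\end{equation*}
Since $u$ vanishes on the faces $x_{i}=\pm 1$ of the box and is convex, the supporting-line inequality at $x_{i}=\pm1/2$ pointing toward $x_{i}=\pm1$ gives $D_{i}u(1/2,\tilde x)\leq 2|u(1/2,\tilde x)|$ and $-D_{i}u(-1/2,\tilde x)\leq 2|u(-1/2,\tilde x)|$. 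These two points lie at distance exactly $t$ from $\p\Omega$, so the Caffarelli H\"older bound $|u|\leq Ct^{2/n}$ (or its log-Lipschitz refinement $Ct|\log t|$ when $n=2$) controls the right-hand side. Chebyshev in the coordinate $x_{i}$ then shows that $D_{ii}u(\cdot,t)\leq Ct^{2/n}/(1-a)$ outside a set of $\mathcal{H}^{1}$-measure at most $1-a$, where $a=(1/2+n-2)/(n-1)$; Fubini and an inclusion-exclusion count over $i=1,\ldots,n-1$ leave a common good set $E_{t}\subset Q_{n}$ with $\mathcal{H}^{n-1}(E_{t})\geq (n-1)a-(n-2)=1/2$. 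On $E_{t}$ you may then finish either by Hadamard's inequality $\det D^{2}u\leq\prod_{i}D_{ii}u$ or by your Schur route through $\det D^{2}_{x'}u\leq\prod_{i<n}D_{ii}u$; both yield $D_{nn}u\geq ct^{2/n-2}$ (respectively $c(t|\log t|)^{-1}$ for $n=2$). In short, the missing ingredient is the one-dimensional slicing-plus-convexity argument that converts the smallness of $u$ at depth $t$ into an integral bound on each $D_{ii}u$, not any rescaling analysis of the slice function $\bar u$.
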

\begin{proof} 
We fix $x_n\in (0,1/2)$ in this proof. 

In view of the Hadamard determinant inequality (see \eqref{Ha_ineq}), to obtain \eqref{Dnnu_ineq}, it suffices to show that all the second pure derivatives $D_{ii} u(x', x_n)$ $(i=1,\ldots, n-1)$ are bounded from above by $Cx_n^{2/n}$ when $n\geq 3$, and by $Cx_n|\log x_n|$ when $n=2$. We will establish these bounds using one-dimensional slicing arguments.

When $n=2$, we can strengthen the H\"older estimate \eqref{C_est} to the following global log-Lipschitz estimate (see \cite[Proposition 1.4]{L})
\begin{equation}
 \label{Log_est}
 |u(x)|\leq C(\diam(\Omega), \Lambda) \dist (x,\p\Omega) (1+ |\log \dist(x,\p\Omega)|) \quad \text{for all }x\in\Omega\subset\R^2.\end{equation}
Now, if $x'\in Q_n$, then $\dist((x', x_n),\p\Omega)=x_n$, and thus
\eqref{C_est} and \eqref{Log_est} give
\begin{equation}
\label{n23_est}
|u(x', x_n)|\leq 
\begin{cases}
C_0(n,\Lambda)x_n|\log x_n|& \text{when }n=2, \\ C_0(n,\Lambda)x_n^{\frac{2}{n}}&\text{when }n\geq 3.
 \end{cases}
\end{equation}
Let 
\[\alpha:= \frac{2}{n},\quad a:= \frac{1}{n-1} (\frac{1}{2} + n-2).\]
Fix \[\tilde x=(x_2, \ldots, x_{n-1}, x_n)\quad \text{where }-\frac{1}{2}\leq x_i\leq \frac{1}{2} \quad \text{for }i=2,\ldots, n-2.\] 
We show that there exists a set $S_{\tilde x}\subset (-1,1)$ with $\mathcal{H}^1(S_{\tilde x})\geq a$ for which $D_{11} u(x_1, \tilde x)$, where $x_1\in S_{\tilde x}$, is bounded from above by $Cx_n^{2/n}$ when $n\geq 3$, and by $Cx_n|\log x_n|$ when $n=2$.

Indeed, by the convexity of $u$ and $u=0$ on $\p\Omega$, we have
\[0=u(1, \tilde x)\geq u(1/2, \tilde x) + D_1 u(1/2,\tilde x) (1/2).\]
Hence,
\[D_1 u(1/2, \tilde x)\leq -2u(1/2, \tilde x) = 2|u(1/2, \tilde x)|.\]
Similarly,
\[-D_1 u(-1/2, \tilde x)\leq 2|u(-1/2, \tilde x)|.\]
Therefore, invoking \eqref{n23_est}, we obtain a positive constant $C_1= 4 C_0(n,\Lambda)$ such that
\begin{equation}
\label{D1u_est}
D_1 u(1/2, \tilde x)-D_1 u(-1/2, \tilde x) 
\leq 
\begin{cases}
C_1(n,\Lambda)x_n|\log x_n|& \text{when }n=2, \\ C_1(n,\Lambda)x_n^\alpha&\text{when }n\geq 3.
 \end{cases}
\end{equation}

{\bf We first consider the case $n\geq 3$.} Let
\[S_{\tilde x}:=\Big\{x_1\in (-1/2, 1/2): D_{11} u(x_1, \tilde x) <\frac{C_1x_n^\alpha}{1-a}\Big \},\]
and \[ L_{\tilde x}: = (-1/2,1/2)\setminus S_{\tilde x}.\]
Then \[D_{11} u(x_1, \tilde x) \geq \frac{C_1x_n^\alpha}{1-a} \quad\text{for }x_1\in  L_{\tilde x}.\] Consequently,
\eqref{D1u_est} implies
\begin{eqnarray*}C_1x_n^{\alpha} \geq D_1 u(1/2, \tilde x)-D_1 u(-1/2, \tilde x) &=&  \int_{-1/2}^{1/2} D_{11} u(x_1, \tilde x) \,dx_1\\ & \geq& \int_{L_{\tilde x}} D_{11} u(x_1, \tilde x) \,dx_1\\
&\geq& \frac{C_1x_n^\alpha }{1-a} \mathcal{H}^{1} (L_{\tilde x}).
\end{eqnarray*}
It follows that 
\[\mathcal{H}^{1} (L_{\tilde x}) \leq 1-a,\]
and hence
\begin{equation}
\label{H1_est}
\mathcal{H}^{1} (S_{\tilde x})\geq a\quad\text{for each }
\tilde x=(x_2, \ldots, x_{n-1}, x_n)\quad \text{where } |x_i|\leq \frac{1}{2} (i=2,\ldots, n-2).\end{equation}
Let
\[E_{i, x_n}:=\Big\{x'\in Q_n: D_{ii} u(x', x_n) <\frac{C_1 x_n^\alpha}{1-a} \Big\},\]
and
\[E_{x_n}= \bigcap_{i=1}^{n-1}E_{i, x_n}.\]
Then, by \eqref{H1_est} and the Fubini Theorem, we have
\begin{equation}
\label{Ei_est}
\mathcal{H}^{n-1} (E_{i, x_n})\geq a.\end{equation}
Note that if $A$ and $B$ are two $\mathcal{H}^{n-1}$ measurable subsets of $Q_n$, then 
\[\mathcal{H}^{n-1}(A\cap B) = \mathcal{H}^{n-1} (A) + \mathcal{H}^{n-1} (B)-\mathcal{H}^{n-1}(A\cup B) \geq  \mathcal{H}^{n-1} (A) + \mathcal{H}^{n-1} (B)-1.\]
By induction, we then obtain from \eqref{Ei_est} that
\begin{equation}
\label{Exn_est}
 \mathcal{H}^{n-1} (E_{x_n})\geq \sum_{i=1}^{n-1} \mathcal{H}^{n-1} (E_{i, x_n}) -(n-2) \geq (n-1) a-(n-2)\geq 1/2. \end{equation}
For $x'\in E_{x_n}$, we have
\[D_{ii} u(x', x_n) \leq \frac{C_1x_n^\alpha}{1-a} \quad\text{for all } i=1,\ldots, n-1. \]
Thus, using the Hadamard determinant inequality
\begin{equation} 
\label{Ha_ineq}
\det D^2 u(x', x_n) \leq  \prod_{i=1}^{n} D_{ii} u(x', x_n), \end{equation}
together with $\det D^2 u(x', x_n)\geq\lambda$,  
we obtain
\begin{equation}
\label{Dnnu_est}
D_{nn} u(x', x_n) \geq \lambda (1-a)^{n-1}C_1^{1-n} x_n^{-(n-1)\alpha}=\lambda (1-a)^{n-1} C_1^{1-n} x_n^{\frac{2}{n}-2} \quad\text{for } x'\in E_{x_n}.\end{equation}
Due to \eqref{Exn_est} and \eqref{Dnnu_est}, the set $E_{x_n}$ satisfies the requirements of the lemma with $c=  \lambda (1-a)^{n-1} C_1^{1-n}$.

{\bf Finally, we consider the case $n=2$.} Then $a=1/2$. As above, it suffices to choose
\[E_{x_2}:=\{x_1\in (-1/2, 1/2): D_{11} u(x_1, x_2) <2C_1 x_2|\log x_2|\}.\]
The lemma is proved.
\end{proof}
\begin{proof}[Completion of the proof of Theorem \ref{W2del_thm}(ii).] We can assume $\Omega= (-1, 1)^{n-1}\times (0, 2)$.
Let $p>0$. Then,  Lemma \ref{D2n_lem} tells us that
\begin{eqnarray*}
\int_\Omega \|D^2 u\|^p\,dx &\geq &\int_0^{1/2} \int_{E_{x_n}} [D_{nn} u(x', x_n)]^p\, dx' dx_n\\
&\geq& 
\begin{cases}
\displaystyle
\frac{1}{2}\int_0^{1/2}(c (x_n |\log x_n|)^{-1})^p\,dx_n & \text{when }n=2, \\
\displaystyle
\frac{1}{2} \int_0^{1/2} (c x_n^{\frac{2}{n}-2})^p\,  dx_n &\text{when }n\geq 3
\end{cases}
=+\infty,
\end{eqnarray*}
if $p\geq \frac{n}{2(n-1)}$. This proves Theorem \ref{W2del_thm}(ii), and completes 
the proof of Theorem \ref{W2del_thm}.
\end{proof}
\section{Further remarks}
The method of the proof of Theorem \ref{W2del_thm}(ii) can be extended to singular and degenerate Monge-Amp\`ere equations. The following proposition is a representative.
\begin{prop}  Let $\Omega$ is a rectangular box  in  $\R^n$ ($n\geq 2$). Let $f\in C^{2,\beta}(\overline\Omega)$ be such that $0<\lambda\leq f\leq\Lambda$ where $\beta\in (0, 1)$.
Let $s\in (-\infty, n-2)$.
Let $u\in C(\overline{\Omega})$ be the  nonzero convex Aleksandrov solution to the Monge-Amp\`ere equation
\begin{equation}
 \label{MA2}
 \left\{
 \begin{alignedat}{2}
   \det D^{2} u~&= f|u|^{s} \h~&&\text{in} ~\Omega, \\\
u &=0\h~&&\text{on}~\p \Omega.
 \end{alignedat}
 \right.
\end{equation}
Then $D^2 u\not\in L^{\frac{n-s}{2(n-s)-2}}(\Omega)$ if $s\leq 0$, and $D^2 u\not\in L^{\frac{n-s}{2(n-s)-2}+\e}(\Omega)$ for any $\e>0$ if $s>0$.
\end{prop}
\begin{proof}
Following the proof of Proposition 2.8 in \cite{LSNS}, we have $u\in C^{4,\beta}(\Omega)$. The case $s=0$ follows from Theorem \ref{W2del_thm}(ii) so we only consider $s\neq 0$. We assume that $\Omega=(-1, 1)^{n-1}\times (0, 2)$, and use the same notation as in Section \ref{Sect22}. In particular, $x_n\in (0, 1/2)$. We consider two separate cases.

{\bf Case 1.} We first consider the case $s< 0$. In Lemma \ref{D2n_lem}, we replace \eqref{Dnnu_ineq} by 
\begin{equation}
\label{Dnnu_ineqs1}
D_{nn} u (x', x_n) \geq c x_n^{\frac{2-2(n-s)}{n-s}}
\end{equation}
where $c= c(n,\lambda,\Lambda, s)>0$, from which it follows that $D^2 u\not\in L^{\frac{n-s}{2(n-s)-2}}(\Omega)$.

To prove \eqref{Dnnu_ineqs1}, we make the following changes in the proof of Theorem \ref{W2del_thm}(ii). Due to \cite[Theorem 1.1 (i)]{LDCDS}, we can replace \eqref{n23_est} 
by
\begin{equation}
\label{n23_ests1}
|u(x', x_n)| \leq C_0(n,\Lambda, s) x_n^{\frac{2}{n-s}}.
\end{equation}
We replace $\alpha$ by 
\[\alpha_s:= \frac{2}{n-s}.\]
From \eqref{Ha_ineq} and 
\[\det D^2 u(x', x_n)\geq\lambda |u(x', x_n)|^s \geq \lambda (C_0 x_n^{\alpha_s})^s,\] we have, instead of \eqref{Dnnu_est},
\[D_{nn} u(x', x_n) \geq \lambda (C_0 x_n^{\alpha_s})^s C_1^{1-n} (1-a)^{n-1} x_n^{-(n-1)\alpha_s}= cx_n^{\frac{2-2(n-s)}{n-s}},\]
which is \eqref{Dnnu_ineqs1} where $c= \lambda C_0^s C_1^{1-n} (1-a)^{n-1}>0$.

{\bf Case 2.} We next consider the case $0<s<n-2$. Let \[0<\mu_1<\frac{2}{n-s}<\mu_2<1.\]
In Lemma \ref{D2n_lem}, we replace \eqref{Dnnu_ineq} by 
\begin{equation}
\label{Dnnu_ineqs2}
D_{nn} u (x', x_n) \geq c x_n^{s\mu_2-(n-1)\mu_1}
\end{equation}
where $c= c(n,\lambda,\Lambda, s,\mu_1,\mu_2)>0$. 

Thus, given any $\e>0$, we can choose $\mu_1$ and $\mu_2$ close to $\frac{2}{n-2}$ so that
\[(s\mu_2-(n-1)\mu_1)(\frac{n-s}{2(n-s)-2}+\e)\leq -1,\]
which shows that $D^2 u\not\in L^{\frac{n-s}{2(n-s)-2}+\e}(\Omega)$.

To prove \eqref{Dnnu_ineqs2}, we make the following changes in the proof of Theorem \ref{W2del_thm}(ii). Due to \cite[Proposition 1]{LDCDS}, 
we can replace \eqref{n23_est} 
by
\begin{equation}
\label{n23_ests2}
|u(x', x_n)| \leq C_0(n,\Lambda, s,\mu_1) x_n^{\mu_1}.
\end{equation}
We replace $\alpha$ by 
\[\alpha_s:= \mu_1.\]
By \cite[Theorem 1.1]{L}, we have
\[|u(x', x_n)|\geq c_1(n, s,\mu_2,\lambda) x_n^{\mu_2}.\]
From \eqref{Ha_ineq} and 
\[\det D^2 u(x', x_n)\geq\lambda |u(x', x_n)|^s \geq \lambda (c_1 x_n^{\mu_2})^s,\] we have, instead of \eqref{Dnnu_est},
\[D_{nn} u(x', x_n) \geq \lambda (c_1 x_n^{\mu_2})^s C_1^{1-n} (1-a)^{n-1} x_n^{-(n-1)\mu_1}= cx_n^{s\mu_2-(n-1)\mu_1},\]
which is \eqref{Dnnu_ineqs2} where $c= \lambda c_1^s C_1^{1-n} (1-a)^{n-1}>0$.

We have completed the proof of the proposition.
\end{proof}
\begin{rem} It would be interesting to establish an analogue of Theorem \ref{W2del_thm}(i) for \eqref{MA2} when $s\neq 0$. If we apply \eqref{P_est} as in the proof of Lemma \ref{D2_ptw}, then in \eqref{Ph_est}, the quantity
$ \|\log f\|_{C^2(\overline{\Omega})}$ has to be replaced by  $\|\log (f|u|^s)\|_{C^2(\overline{\Omega})}$ which we do not have a priori control.
\end{rem}

\end{document}